\documentclass[12pt]{amsart} 

\usepackage[margin=2.9cm]{geometry}             
\usepackage{color}
\usepackage{graphicx}
\usepackage{amssymb, enumerate,bm}
\usepackage[matrix,arrow,ps]{xy}
\usepackage{cite}
\usepackage{epstopdf, amsmath}
\usepackage{paralist}
\DeclareGraphicsRule{.tif}{png}{.png}{`convert #1 `dirname #1`/`basename #1 .tif`.png}

\newcommand{\C}{\mathbb C}

\newcommand{\transp}{\,^t}

\newtheorem{theo}{Theorem}[section]
\newtheorem{lemma}[theo]{Lemma}
\newtheorem{cor}[theo]{Corollary}

\theoremstyle{remark}
\newtheorem{remark}[theo]{Remark}
\theoremstyle{example}

\theoremstyle{definition}

\numberwithin{equation}{section}

\begin{document}

\begin{abstract}
We construct group-invariant CR maps from the unit sphere in $\C^3$ and provide sharp bounds for the  gap termination in this setting. 
\end{abstract}

\author{Mona Al Batrouni and Florian Bertrand }
\title[Group-Invariant CR maps of the sphere $\mathbb{S}^5$]{Group-Invariant CR maps of the sphere $\mathbb{S}^5$}

\subjclass[2025]{32H35, 32H02, 32V99}

\keywords{CR maps, group-invariant maps, sphere maps, gap termination}

\maketitle 
\vspace{-3mm }


\section*{Introduction}
The study of CR maps between spheres is an important question in CR geometry. Nonconstant CR maps between spheres can be extended as proper holomorphic maps between the corresponding balls, and we know from Forstneri\v{c} \cite{fo1} that such 
maps, when assumed to be smooth, are necessarily rational. In the present paper, we focus on symmetric CR maps, that is, CR maps that are invariant under certain subgroups of the unitary group. Forstneri\v{c} in \cite{fo2} showed that there are restrictions on the considered   subgroups, 
and D'Angelo-Lichtblau \cite{da-li} characterized all possible subgroups that give rise to nonconstant invariant CR maps. In recent years, this question and its connection with the gap phenomenon has attracted a lot of attention 
\cite{da4,da2,da1,da3,dkr,da-le,brgr2,hu,hjy,brgr,eb} with important foundational work \cite{we,fa}. Some of these works rely on a clever interplay between CR geometry and Real Algebraic Geometry by means of special real-valued polynomials. In this direction, we emphasize the recent article \cite{bcggkmp} in which the authors construct nonconstant invariant CR maps between spheres and  study gap termination phenomena in that setting.  In particular, they obtain a sharp bound for the gap termination in the case of $\C^2$ and suggest that similar bounds may occur in $\C^n$. In the present paper, we address this question in the case of $\C^3$ and provide in Theorem \ref{theomain}  sharp bounds of order comparable to those obtained in Theorem 2 in \cite{bcggkmp}. 

Near completion of this work, we understood, after a discussion with some authors of \cite{bcggkmp}, that some of them and their students are currently investigating similar and related questions, with a work \cite{blmsw} being finalized. From our understanding, the results do not overlap and may coexist along each other.
\section{Preliminaries}

\subsection{Admissible subgroups, canonical CR maps and polynomials}

We denote by $\mathbb{S}^{2n-1}=\{(z_1,\ldots,z_n) \in \C^n \ | \ |z_1|^2+\ldots + |z_n|^2=1\}$ the unit sphere in $\C^n$ and by $U(n)$ the group of unitary matrices of size $n$.  

Let  $\Gamma$ be a finite and fixed point free subgroup of  $U(n)$. A CR map between spheres \\ $\varphi: \mathbb{S}^{2n-1} \to \mathbb{S}^{2N-1}$ is {\it $\Gamma$-invariant} if $\varphi \circ \gamma = \varphi$ for all $\gamma \in \Gamma$. Groups that admit such invariant smooth and nonconstant CR maps are entirely characterized by D'Angelo–Lichtblau in \cite{da-li}. More precisely, denoting by $I_d$ the identity matrix of size $d$, they must be equivalent to one of the following: 
\begin{equation*}
\begin{cases}
\langle\omega I_j \oplus\omega^2 I_{n-j}\rangle  \mbox{ where $\omega$ is a primitive $p^{th}$ root of unity, and $p$ an odd integer,}\\ 
\langle \omega I_n \rangle  \mbox{ where $\omega$ is a primitive $p^{th}$ root of unity,}\\
\langle \omega I_j \oplus \omega^2 I_k \oplus \omega^4 I_{n-j-k} \rangle \mbox{ where $\omega$ is a $7^{th}$ root of unity.} 
  \end{cases}
\end{equation*}
Following \cite{bcggkmp}, we call these subgroups {\it admissible}. 

\vspace{0.5cm}

Let $\Gamma$ be an admissible subgroup of $U(n)$. Following e.g. D'Angelo--Lichtblau in \cite{da-li} we now recall the construction of an important  canonical CR map between spheres (see also \cite{da2}). 
For $z,w\in \C^n$,  we denote by $\langle z, w \rangle = \transp \overline{z} w$ the usual Hermitian product in $\C^n$. We define the following polynomial
$$\Phi_{\Gamma}(z, \bar{z}) = 1 - \prod_{\gamma \in \Gamma} \left( 1 - \langle \gamma z, z \rangle \right).$$
Note that $\Phi_{\Gamma}$ is real-valued  and $\Gamma$-invariant, that is,  $\Phi_{\Gamma}(\gamma z, \overline{\gamma z})=\Phi_{\Gamma}(z, \bar{z})$ for any $\gamma \in \Gamma$ and $z\in \C^n$ (see Lemma 3 in \cite{da-li}). The coefficients of $\Phi_{\Gamma}$ define a 
positive semi-definite  Hermitian form. We denote by $N(\Gamma)$ its rank and we write 
$$\Phi_{\Gamma}(z, \bar{z})=\|\varphi_{\Gamma}(z)\|^2,$$ 
where $\varphi_{\Gamma}:  \mathbb{S}^{2n-1} \to \mathbb{S}^{2N(\Gamma)-1}$ is a nonconstant $\Gamma$-invariant holomorphic polynomial 
with $N(\Gamma)$ linearly independent components. We refer to  $\varphi_{\Gamma}$ as the  {\it canonical $\Gamma$-invariant CR map associated to $\Gamma$}. 

\vspace{0.5cm}

Questions related to CR maps between spheres and the map  $\varphi_{\Gamma}$ in particular, may be addressed by means of real-valued polynomials with nonnegative 
coefficients \cite{da1, da-li,da-le,bcggkmp}. Assume that $\varphi \colon \mathbb{C}^n \to \mathbb{C}^N$ is a holomorphic map with monomial  components satisfying $\varphi(\mathbb{S}^{2n-1}) \subset \mathbb{S}^{2N-1}$. We write  
$\varphi(z) = (\ldots, c_\alpha z^\alpha, \ldots)$
and  note that whenever $\sum_{j=1}^n |z_j|^2 = 1$ we have $\sum_{\alpha} |c_\alpha|^2 |z^\alpha|^2 = 1.$
It follows that the real-valued polynomial defined by 
$$P(x_1,\ldots,x_n):=\sum_{\alpha} |c_\alpha|^2 x^\alpha$$ is equal to $1$ in the hyperplane  
$x_1 + \ldots + x_n = 1$. The {\it rank of $P$}, that is, the number of linearly independent monomials, is equal to $N$. 
Moreover, if the map $\varphi$ is $\Gamma$-invariant, then the polynomial $P$ is also $\Gamma$-invariant. We then denote by $\mathcal{P}(\Gamma)$ the set of $\Gamma$-invariant real-valued polynomials $p(x_1,\ldots,x_n)$ with nonnegative coefficients and which are equal $1$ whenever $x_1 + \ldots + x_n = 1$. In turn, any polynomial  $p \in \mathcal{P}(\Gamma)$ of rank $N$ defines a  $\Gamma$-invariant CR map from 
$\mathbb{S}^{2n-1}$ to $\mathbb{S}^{2N-1}$. This correspondence is an important tool in order to construct explicitly nonconstant smooth $\Gamma$-invariant CR maps between spheres.

In case the map in consideration is the canonical CR map $\varphi_{\Gamma}$, we  denote by $f_{\Gamma}$ the corresponding polynomial, called the {\it canonical $\Gamma$-invariant polynomial}, and we note that 
$$f_{\Gamma}(x_1, \ldots, x_n) = f_{\Gamma}(|z_1|^2, \ldots, |z_n|^2)=\Phi_{\Gamma}(z, \bar{z}).$$
The rank of  $f_{\Gamma}$ is precisely $N(\Gamma)$.

\subsection{Review of  previous results}
The family of polynomials presented in the previous section was first discovered by D'angelo in \cite{da4} and their interplay with
proper holomorphic maps between balls or CR maps between spheres has been extensively studied since then; see for instance \cite{da1,da2,da3,dkr,da-le,bcggkmp}. In the context of $\Gamma$-invariant CR maps, we emphasize the following two recent results of \cite{bcggkmp}. Let $\Gamma$ be an admissible subgroup of $U(n)$. For any $N\geq N(\Gamma)^2-2N(\Gamma)+2$, there is a nonconstant smooth 
 $\Gamma$-invariant CR map $\varphi: \mathbb{S}^{2n-1} \to \mathbb{S}^{2N-1}$ for which $N$ is the minimal embedding 
dimension (Theorem 1 in \cite {bcggkmp}). The proof which relies on the correspondence between $\Gamma$-invariant CR maps 
and $\mathcal{P}(\Gamma)$ is adapted from the one given by D'Angelo-Lebl \cite{da-le} in the noninvariant case. The authors then sharpen the above bound in the case of the 
sphere $\mathbb{S}^3\subset \C^{2}$, showing that the minimal rank is then $2N(\Gamma)-1$ (Theorem 2 in \cite {bcggkmp}). The proof is based  on a 
well chosen sequence of iterations applied to the canonical $\Gamma$-invariant polynomial $f_{\Gamma}$ that allows to construct 
new polynomials $\mathcal{P}(\Gamma)$ of any rank greater than or equal to  $2N(\Gamma)-1$. The authors in \cite {bcggkmp}  
hint that a similar approach may provide a  sharper minimal dimension than 
$N(\Gamma)^2-2N(\Gamma)+2$ for the sphere $\mathbb{S}^5,$ and for $\mathbb{S}^{2n-1}$ in general. The case of  
$\mathbb{S}^5$ is answered in the next section. 

\section{Group-invariant CR maps of the sphere $\mathbb{S}^5$}

Our main theorem is the following.
\begin{theo}\label{theomain} 
Let $\Gamma$ be an admissible subgroup of $U(3)$. There exists an integer $n(\Gamma)< N(\Gamma)^2-2N(\Gamma)+2$ such that if $N\geq n(\Gamma)$ then there is a smooth nonconstant $\Gamma$-invariant CR map $f:\mathbb{S}^5 \to \mathbb{S}^{2N-1}$ for which $N$ is the minimal embedding dimension. More precisely:
\begin{enumerate}[i.]
\item If $\Gamma$ is equivalent to $\big\langle\omega I_2 \oplus\omega^2 I_1\big\rangle$ or $\big\langle\,\omega I_1 \oplus\omega^2 I_2\big\rangle$, where $\omega$ is a primitive $p^{th}$ root of unity with $p=2k+1$, then 
$$n(\Gamma)=2N(\Gamma)-1+k^2-1.$$
\item If $\Gamma$ is equivalent to  $\langle \omega I_3 \rangle$, where $\omega$ is a primitive $p^{th}$ root of unity, then 
$$n(\Gamma)=2N(\Gamma)-1+(p-2)p.$$
\item  If $\Gamma$ is equivalent to  $\langle \omega I_1 \oplus \omega^2 I_1 \oplus \omega^4 I_1 \rangle$, where $\omega$ is a $7^{th}$ root of unity, then 
$$n(\Gamma)=2N(\Gamma)-1+3\cdot 7+2=56.$$
\end{enumerate}
\end{theo}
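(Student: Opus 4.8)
The plan is to use the correspondence recalled in Section 1 to reduce the whole statement to a combinatorial problem about the set $\mathcal{P}(\Gamma)$. Since the rank of a polynomial in $\mathcal{P}(\Gamma)$ equals the number of its linearly independent monomial components, and this number is precisely the minimal embedding dimension of the associated (polynomial, hence smooth and nonconstant) map, it suffices to produce, for every integer $N \geq n(\Gamma)$, a polynomial $p \in \mathcal{P}(\Gamma)$ of rank exactly $N$. First I would make the canonical data explicit in each case. Writing $u = x_1 + x_2 + x_3$ and $\eta = \bar\omega$, I factor $\prod_{\gamma \in \Gamma}(1 - \langle \gamma z, z\rangle)$: in case (ii) it collapses, via $\prod_{m=0}^{p-1}(1 - \eta^m u) = 1 - u^p$, to $f_{\Gamma} = u^p$, so that $N(\Gamma) = \binom{p+2}{2}$; in cases (i) and (iii) I expand the weighted products in $t = x_1 + x_2$, $s = x_3$ (resp. in the three weights $1,2,4$) and count the distinct monomials $x_1^a x_2^b x_3^c$ allowed by the invariance congruence (for case (i) one normalization gives $a + b + 2c \equiv 0 \pmod p$, and for case (iii) $a + 2b + 4c \equiv 0 \pmod 7$) in order to pin down $N(\Gamma)$, and in particular $N(\Gamma) = 17$ in case (iii).

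The engine of the construction is a rank-raising operation on $\mathcal{P}(\Gamma)$. Fix an invariant polynomial $g \in \mathcal{P}(\Gamma)$ equal to $1$ on the hyperplane $\{u = 1\}$, the simplest choice being $g = f_{\Gamma}$ (so $g = u^p$ in case (ii)). Given $p \in \mathcal{P}(\Gamma)$ and a decomposition $p = q + r$ into nonnegative sub-sums, I replace $p$ by $p' = q\,g + r$. Then $p'$ is again in $\mathcal{P}(\Gamma)$: its coefficients are nonnegative, its monomials still satisfy the invariance congruence because those of $q$ and of $g$ do, and on $\{u = 1\}$ one has $p' = q + r = 1$. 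The effect on the rank is the number of monomials of $q\,g$ that are genuinely new minus the monomials of $q$ that disappear. When $q$ is a single monomial of top degree, $q\,g$ lands in a fresh degree and contributes $N(\Gamma)$ new monomials, giving the clean increment $+(N(\Gamma)-1)$; tensoring a monomial of lower degree, or transferring only part of a coefficient so that a monomial survives in several degrees at once, forces collisions inside the Newton triangle and yields strictly smaller increments.

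With this in hand, iterating top-degree tensoring on $f_{\Gamma}$ realizes the ranks $1 + m\,(N(\Gamma)-1)$, and the finer interior increments are what allow one to reach every residue class modulo $N(\Gamma)-1$. The strategy is then numerical-semigroup in spirit: I would first build, just above the base range, a block of $N(\Gamma)-1$ consecutive achievable ranks, and then translate this block upward by repeated top-degree tensorings so that the blocks tile $[\,n(\Gamma), \infty)$ without gaps. The threshold $n(\Gamma)$ is exactly the first integer from which this tiling is seamless, and the case-dependent corrections $k^2 - 1$, $(p-2)p$ and $3\cdot 7 + 2$ should emerge as the precise shortfall computed from the monomial combinatorics at the bottom of the simplex (degrees $p$ and $2p$). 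The hard part will be this bookkeeping: unlike the $\mathbb{S}^3$ case of \cite{bcggkmp}, where tensoring by $x_1 + x_2$ splits one monomial into two and changes the rank by exactly one, invariance here forbids any degree-one tensor and forces increments as large as $N(\Gamma)-1$, so one must exhibit explicit polynomials realizing every single value in an interval of length $N(\Gamma)-1$ above $n(\Gamma)$ while keeping exact track of which monomials $x_1^a x_2^b x_3^c$ collide.

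Finally, to justify that $n(\Gamma)$ is sharp rather than merely sufficient, I would show that $n(\Gamma) - 1$ is a genuine gap, i.e. that no $p \in \mathcal{P}(\Gamma)$ has that rank. This amounts to a lower bound on the number of distinct invariant monomials forced in any admissible $p$, obtained from the divisibility $(u-1)\mid(p-1)$ together with the nonnegativity of the coefficients. I expect this lower bound, rather than the construction, to be the most delicate point, since it must hold across all of $\mathcal{P}(\Gamma)$ and must match the constructive bound exactly.
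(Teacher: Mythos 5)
Your skeleton coincides with the paper's: pass to $\mathcal{P}(\Gamma)$, modify the canonical polynomial $f_\Gamma$ by replacing a chosen sub-sum $q$ with $q\cdot f_\Gamma$ (or transferring half of a coefficient so the old monomial survives), build a block of $N(\Gamma)-1$ consecutive achievable ranks, and translate it upward by tensoring the top $x_1$-degree monomial, which cleanly adds $N(\Gamma)-1$. The paper's iterations $F_j$, $G_j$, $H$ are exactly special cases of your operation $p\mapsto q\,g+r$, and your ``numerical semigroup'' tiling is how the paper finishes. The problem is that, as written, your proposal is a strategy rather than a proof: the entire content of the theorem is the precise values $k^2-1$, $(p-2)p$ and $3\cdot 7+2$, and these come out of the collision bookkeeping that you explicitly defer. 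The paper's lemmas carry out exactly those counts: each $F$-iteration on $x_1^{p-j}x_2^{j}$ raises the rank by $k+1$ in case i and by $p$ in case ii, each $G$-iteration adds one more, so the family $f_{m_0,m}$, $m_0=-1,\dots,m$, gives a block of $m+2$ consecutive ranks starting at $2N(\Gamma)-1+m(k+1)$ (resp. $2N(\Gamma)-1+mp$); consecutive blocks chain without gaps exactly when $m+2\geq k+1+1$ (resp. $m+2\geq p$), i.e. from $m\geq k-1$ (resp. $m\geq p-2$), which is precisely where $n(\Gamma)=2N(\Gamma)-1+k^2-1$ and $2N(\Gamma)-1+(p-2)p$ come from. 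Without these increment computations (which occupy most of the paper), your $n(\Gamma)$ is not determined, only predicted.

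Beyond the deferred bookkeeping there are two concrete gaps. First, in case ii your plan ``build one block of $N(\Gamma)-1$ consecutive ranks just above $n(\Gamma)$, then translate'' cannot be executed with the $F/G$-type iterations alone: chaining the blocks for $m=p-2,p-1,p$ yields only $3p+2$ consecutive ranks, while $N(\Gamma)-1=\tfrac{(p+1)(p+2)}{2}-1$ is strictly larger for $p\geq 5$, and the $x_1$-direction is exhausted after $m=p$. The paper must introduce the additional iteration $H$ (tensoring on $x_3^p$), compute its increment $N(\Gamma)-2-m$, and run a Euclidean-division argument to show that the remaining ranks up to $n(\Gamma)+N(\Gamma)-2$ are realized by $F_0(f_{r,q})$ or $H(f_{m_0,q+1})$; nothing in your outline anticipates this obstruction or supplies the tool that resolves it. (Case iii is similarly not covered by your uniform scheme: since the variables have weights $1,2,4$, the paper abandons the $F/G/H$ formalism there and checks seven ad-hoc iterations by hand.) Second, your final step --- proving that $n(\Gamma)-1$ is not the rank of any polynomial in $\mathcal{P}(\Gamma)$ --- is not part of the statement and is not proved in the paper either: the theorem only asserts that every $N\geq n(\Gamma)$ is realized, with $n(\Gamma)$ below the general bound $N(\Gamma)^2-2N(\Gamma)+2$. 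A nonexistence result over all of $\mathcal{P}(\Gamma)$ (rather than over the constructed family) is a much harder problem, possibly false as stated, and you would be stuck there proving something the theorem does not claim.
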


\begin{remark}
The idea behind writing $n(\Gamma)$ as $2N(\Gamma)-1+\ldots$ in this theorem is for comparison purposes with the minimal rank $2N(\Gamma)-1$ obtained in Theorem 2 in \cite{bcggkmp}. 
\end{remark}

\begin{remark}
In the rest of the paper, we assume that  $\Gamma$ is actually equal to one of the four subgroups mentioned in Theorem \ref{theomain}. Indeed, we know from Grundmeier \cite{gr} that the minimal embedding dimension is invariant under coordinate change. 
\end{remark}

The strategy to prove Theorem \ref{theomain} follows directly from \cite{bcggkmp}. The key point is to construct polynomial maps in $\mathcal{P}(\Gamma)$ 
of ranks spanning a set
consisting of $N(\Gamma)-1$ consecutive integers, namely $\{n(\Gamma),n(\Gamma)+1,\ldots,n(\Gamma) +N(\Gamma)-2\}$. Reaching higher ranks is then obtained 
by multiplying the term of higher degree in $x_1$ of such a  polynomial by the canonical $\Gamma$-invariant polynomial $f_\Gamma$.
Although combinatorially more challenging, the iterations used to prove the first point i. of Theorem \ref{theomain} arise already in \cite{bcggkmp}. However, these 
iterations alone do not help cover all ranks after $n(\Gamma)$ in the case of $\langle \omega I_3 \rangle$. We also note that the proof of the iii. is purely ad-hoc and slightly different from the two others due to the variables having distinct weight. In any case, it is important for our approach to understand how any iteration in $\mathcal{P}(\Gamma)$ impacts the rank of 
a given polynomial.     

\subsection{Iterations and their action on the rank of polynomials}

Let $\Gamma$ be an admissible subgroup of $U(3)$. 
 We introduce important self maps of $\mathcal{P}(\Gamma)$
$$F_j,G_j: \mathcal{P}(\Gamma)\to \mathcal{P}(\Gamma),$$
$j=0,1,\ldots,p$. For a polynomial 
$P \in \mathcal{P}(\Gamma)$ containing the monomial $ x_1^{p-j}x_2^j$, $F_j(P)$ is obtained from $P$ by  replacing $x_1^{p-j}x_2^j$  by $x_1^{p-j}x_2^j \cdot f_{\Gamma}$ while  $G_j(P)$ is the polynomial obtained by  replacing the monomial $x_1^{p-j}x_2^j$ in $P$ by the 
term $\frac{1}{2}x_1^{p-j}x_2^j + \frac{1}{2}x_1^{p-j}x_2^j  \cdot f_{\Gamma}.$ In case $P$ does not contain the monomial $ x_1^{p-j}x_2^j$, we  set $F_j(P)=G_j(P)=P$. We also define  the map 
$$H:\mathcal{P}(\Gamma) \to \mathcal{P}(\Gamma)$$
where $H(P)$ is obtained from $P$ by replacing the monomial $x_3^p$ by $x_3^p\cdot f_\Gamma$. In case $P$ does not contain the monomial $x_3^p$, we define $H(P)=P$.

In what follows, we study the action of these maps on the rank of polynomials in $\mathcal{P}(\Gamma)$. More precisely, we define 
 for a nonnegative integer $m\leq p$ and $m_0=0,\ldots,m$, the following invariant polynomials
 \begin{equation}\label{eqf1}
 f_{-1,m}:= F_m\circ \ldots \circ F_1 \circ F_0(f_\Gamma),
 \end{equation}
 \begin{equation}\label{eqf2}
 f_{m_0,m} :=F_m\circ  \ldots F_{m_0+2} \circ F_{m_0+1} \circ G_{m_0} \circ \ldots \circ G_1 \circ G_0(f_\Gamma).
 \end{equation}

We start with the following lemma.
\begin{lemma}\label{lemran1}
Let $\Gamma= \langle \omega I_2 \oplus \omega^2 I_1 \rangle$ or $ \langle \omega I_1 \oplus \omega^2 I_2 \rangle$,  where $\omega$ is a $p^{th}$ root of unity with $p = 2k + 1$ an odd integer. For $m=0,\ldots, p$, we have 
$$\text{rank}(f_{-1,m})=2N(\Gamma)-1+m(k+1).$$ 
More precisely, for  $m=0,\ldots, p-1$
\begin{equation}\label{eqran}
\text{rank}(f_{-1,m+1})=\text{rank}(f_{-1,m})+k+1.
\end{equation}
\end{lemma}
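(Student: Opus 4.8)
The plan is to make $f_\Gamma$ completely explicit, translate the rank into a lattice-point count over its support, and then exploit a weight-preserving symmetry that renders that count independent of $m$. First I would compute $f_\Gamma$ in closed form. For $\Gamma=\langle\omega I_2\oplus\omega^2 I_1\rangle$, writing $\Gamma=\{g^\ell\}$ and setting $u=x_1+x_2$, $v=x_3$, the factor attached to $g^\ell$ is $1-\omega^\ell u-\omega^{2\ell}v$ (the conjugation in the Hermitian product is immaterial once we multiply over the whole group), so $f_\Gamma=1-\prod_{\ell=0}^{p-1}(1-\omega^\ell u-\omega^{2\ell}v)$ is a polynomial in $u,v$ alone. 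Denoting by $r_1,r_2$ the roots of $v\zeta^2+u\zeta-1$ and using $\prod_{\zeta^p=1}(\zeta-r)=1-r^p$ (valid since $p$ is odd), one finds $f_\Gamma=v^p-P_p$, where $P_n$ is the Lucas/Dickson sequence $P_n=-uP_{n-1}+vP_{n-2}$, $P_0=2$, $P_1=-u$. The explicit Dickson form then yields
\[
f_\Gamma=x_3^p+\sum_{j=0}^{k}\frac{p}{p-j}\binom{p-j}{j}(x_1+x_2)^{p-2j}x_3^j,
\]
with all coefficients positive and with no constant or linear term (its minimal total degree is $k+1\ge 2$). I read off from this that the support of $f_\Gamma$, regarded as a set of exponent vectors, is the disjoint union of the \emph{complete slices} $S_j=\{\beta\ge 0: \beta_1+\beta_2=p-2j,\ \beta_3=j\}$, $j=0,\ldots,k$, together with the singleton $\{(0,0,p)\}$; "complete" means each slice contains every nonnegative lattice point with the prescribed values of $\beta_1+\beta_2$ and $\beta_3$. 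The group $\langle\omega I_1\oplus\omega^2 I_2\rangle$ is handled identically after the coordinate relabeling that places the repeated block in positions $1,2$ (legitimate by Grundmeier's invariance of the minimal embedding dimension, invoked in the Remark): its support is again a disjoint union of $k+2$ complete slices, so the combinatorics below covers both groups at once.

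For the base case I would apply $F_0$, which replaces $x_1^p$ (coefficient $1$) by $x_1^p f_\Gamma$. Since $f_\Gamma$ has no constant term and minimal degree $\ge 2$, every monomial of $x_1^p f_\Gamma$ has total degree $>p$ and is therefore new (all monomials of $f_\Gamma$ have degree $\le p$), and the $N(\Gamma)$ products are pairwise distinct. Thus $F_0$ deletes one monomial and adjoins $N(\Gamma)$ new ones, giving $\text{rank}(f_{-1,0})=N(\Gamma)-1+N(\Gamma)=2N(\Gamma)-1$.

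For the increment \eqref{eqran} I would first note that, because every product $x_1^{p-j}x_2^jf_\Gamma$ has total degree $\ge p+2$ while the monomials $x_1^{p-i}x_2^i$ have degree $p$, the maps $F_0,\ldots,F_m$ do not interfere: each later $F_{m+1}$ still sees its target $x_1^{p-(m+1)}x_2^{m+1}$ with its original coefficient, and
\[
f_{-1,m}=\Big(f_\Gamma-\sum_{j=0}^m\binom{p}{j}x_1^{p-j}x_2^j\Big)+\sum_{j=0}^m\binom{p}{j}x_1^{p-j}x_2^j\,f_\Gamma .
\]
All retained coefficients are positive, so the support is exactly $A_m\sqcup B_m$, where $A_m$ is the surviving degree-$\le p$ part and $B_m=\bigcup_{j=0}^m\{x_1^{p-j}x_2^jx^\gamma:\gamma\in\mathrm{supp}(f_\Gamma)\}$ consists of degree-$>p$ monomials. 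Passing from $f_{-1,m}$ to $f_{-1,m+1}$ deletes one element of $A_m$ (the target, not in $B_m$ by degree) and adjoins $\{x_1^{p-(m+1)}x_2^{m+1}x^\beta:\beta\in\mathrm{supp}(f_\Gamma)\}$, which is disjoint from $A_m$ by degree. Hence the rank changes by $-1$ plus the number of $\beta\in\mathrm{supp}(f_\Gamma)$ whose product is not already in $B_m$.

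The crux is to evaluate that number, and this is the step I expect to be the main obstacle, since it is exactly what forces the increment to be independent of $m$. Matching exponents shows that $x_1^{p-(m+1)}x_2^{m+1}x^\beta\in B_m$ precisely when $\sigma_t(\beta):=(\beta_1-t,\beta_2+t,\beta_3)\in\mathrm{supp}(f_\Gamma)$ for some $t\in\{1,\ldots,m+1\}$. The decisive observation is that $\sigma_t$ preserves both $\beta_1+\beta_2$ and $\beta_3$, hence preserves each slice, which is complete; therefore $\sigma_t(\beta)\in\mathrm{supp}(f_\Gamma)$ if and only if $\beta_1-t\ge 0$. Consequently a given $\beta$ is already accounted for if and only if $\beta_1\ge 1$ (take $t=1$), and the genuinely new monomials correspond exactly to the support points with $\beta_1=0$. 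Each slice contains a unique such point, so their number equals the number of slices, namely $k+2$, independently of $m$. The rank therefore increases by $-1+(k+2)=k+1$ at every step $m=0,\ldots,p-1$, which is \eqref{eqran}; combining with the base case gives $\text{rank}(f_{-1,m})=2N(\Gamma)-1+m(k+1)$ for all $m=0,\ldots,p$.
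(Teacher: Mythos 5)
Your proof is correct, and it arrives at the same count as the paper --- at each step the $k+2$ genuinely new monomials are exactly the products $x_1^{p-(m+1)}x_2^{m+1}x^\beta$ with $\beta_1=0$ --- but it gets there by a different mechanism. The paper proceeds by induction, carrying along an explicit closed form of $f_{-1,m}$ and comparing the three blocks I, II, III of $x_1^{p-m-1}x_2^{m+1}\cdot f_\Gamma$ term by term against that form; you replace this bookkeeping with two structural observations: (i) a closed (Dickson) formula for $f_\Gamma$, from which you read off that its support is a disjoint union of \emph{complete} slices $\{\beta_1+\beta_2=p-2j,\ \beta_3=j\}$, $j=0,\ldots,k$, together with $(0,0,p)$, and (ii) the fact that membership of a new product in the previously created set $B_m$ is governed by the shifts $\sigma_t(\beta)=(\beta_1-t,\beta_2+t,\beta_3)$, which preserve each slice. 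Completeness of the slices then makes ``already seen'' equivalent to $\beta_1\ge 1$, so the increment $-1+(k+2)=k+1$ is manifestly independent of $m$, with no induction hypothesis on the shape of $f_{-1,m}$ required; positivity of all coefficients (which you get from the Dickson formula, and the paper gets from the known positivity of the $c_j$) rules out cancellation, so ranks really are support counts, as both arguments need. Two remarks. First, the explicit Dickson coefficients are more than you need: the paper's form $f_\Gamma=f_{p,2}(x_1+x_2,x_3)$ with unspecified positive $c_j$ already yields the complete-slice structure, so your closed-form computation, while correct, could be omitted. Second, for $\langle\omega I_1\oplus\omega^2 I_2\rangle$ your relabeling is the right reading of the statement --- with the iterations taken literally on $x_1^{p-j}x_2^j$ they would act trivially on $f_{p,2}(x_1,x_2+x_3)$ after the first step, so the maps $F_j$ must target the equal-weight pair of variables, exactly as the paper implicitly does when it declares this case ``similar'' --- but invoking Grundmeier's theorem is unnecessary for the lemma itself: renaming variables is a bijection on monomials and preserves ranks outright.
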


As a direct corollary, we obtain: 
\begin{cor}\label{coran1}
Following the notations of Lemma \ref{lemran1}, we have  for $m=0,\ldots,p$ and $m_0=0,\ldots,m$  
$$\text{rank}(f_{m_0,m})=2N(\Gamma)+m_0(k+2)+(m-m_0)(k+1)=2N(\Gamma)+m_0+m(k+1).$$ 
\end{cor}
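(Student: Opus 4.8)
The plan is to read $\text{rank}(f_{m_0,m})$ off directly from $\text{rank}(f_{-1,m})$, which is supplied by Lemma \ref{lemran1}, by comparing the two iteration schemes monomial by monomial. The crucial observation is that $F_j$ and $G_j$ differ in exactly one respect: $F_j$ deletes the monomial $x_1^{p-j}x_2^j$ outright, whereas $G_j$ retains a copy of it with halved coefficient; apart from this, both operators append precisely the same family of monomials, namely those of $x_1^{p-j}x_2^j\cdot f_\Gamma$. Since $f_\Gamma(0,0,0)=\Phi_\Gamma(0,0)=0$, the polynomial $f_\Gamma$ has no constant term, so every monomial of $x_1^{p-j}x_2^j\cdot f_\Gamma$ has total degree strictly larger than $p$.

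Next I would note that $f_{m_0,m}$ is obtained from $f_{-1,m}$ simply by replacing $F_0,\ldots,F_{m_0}$ with $G_0,\ldots,G_{m_0}$, while keeping $F_{m_0+1},\ldots,F_m$. Because all coefficients in play are nonnegative, no cancellation ever occurs, and the rank of a polynomial in $\mathcal{P}(\Gamma)$ is simply the number of distinct monomials it contains. Hence the monomials of $f_{m_0,m}$ are exactly those of $f_{-1,m}$ together with the $m_0+1$ degree-$p$ monomials $x_1^p,\,x_1^{p-1}x_2,\,\ldots,\,x_1^{p-m_0}x_2^{m_0}$, which are retained by $G_0,\ldots,G_{m_0}$ but were deleted by the corresponding $F_j$ in $f_{-1,m}$.

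The one point that genuinely needs checking, and the main (minor) obstacle, is that these $m_0+1$ retained monomials are new, i.e.\ not already present in $f_{-1,m}$. This is settled by a degree count: each retained monomial has total degree exactly $p$, involves only $x_1,x_2$, and carries an $x_2$-exponent at most $m_0$; on the other hand every monomial \emph{created} by the iterations has degree $>p$, so the only pure $(x_1,x_2)$ degree-$p$ monomials surviving in $f_{-1,m}$ are $x_1^{p-j}x_2^j$ with $j\ge m+1$, the operators $F_0,\ldots,F_m$ having removed those with $j\le m$. Since $m_0\le m<m+1$, there is no overlap and the $m_0+1$ monomials are genuinely new; one also checks, exactly as in the proof of Lemma \ref{lemran1}, that every operator fires, since each $x_1^{p-j}x_2^j$ occurs with positive coefficient $\binom{p}{j}$ (coming from the term $(x_1+x_2)^p$ of $f_\Gamma$) and is untouched by the earlier steps. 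Combining this with Lemma \ref{lemran1} gives $\text{rank}(f_{m_0,m})=\text{rank}(f_{-1,m})+(m_0+1)=2N(\Gamma)-1+m(k+1)+m_0+1=2N(\Gamma)+m_0+m(k+1)$, and a one-line rearrangement yields the equivalent expression $2N(\Gamma)+m_0(k+2)+(m-m_0)(k+1)$.
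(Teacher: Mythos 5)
Your proof is correct and follows exactly the reasoning the paper leaves implicit when it calls this a ``direct corollary'' of Lemma \ref{lemran1}: each $G_j$ differs from $F_j$ only by retaining the degree-$p$ monomial $x_1^{p-j}x_2^j$, these $m_0+1$ retained monomials cannot coincide with any monomial of $f_{-1,m}$ (all created monomials have degree $>p$), so the rank is $\mathrm{rank}(f_{-1,m})+m_0+1$. Your degree-counting justification of the ``no overlap'' step is a welcome explicit check, but the approach is the same as the paper's.
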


\begin{proof}[Proof of Lemma \ref{lemran1}] We first focus on  the subgroup $\Gamma= \langle \omega I_2 \oplus \omega^2 I_1 \rangle$. We set $p = 2k + 1$ for  $k \geq1$ and we consider  the invariant polynomial $f_{p,2}$ associated with the group $\langle \omega I_1 \oplus \omega^2 I_1 \rangle \leq U(2)$
given by 
$$f_{p,2}(x, y) = x^{p} + \sum_{j=1}^{\frac{p - 1}{2}} c_j x^{p - 2j} y^j + y^{p}.$$
The corresponding canonical $\Gamma$-invariant polynomial is  
\begin{eqnarray}\label{eqf}
f_{\Gamma}(x_1,x_2,x_3) & = & f_{p,2}(x_1 + x_2, x_3) \\\nonumber
& =&  x_1^{p} + \sum_{j=1}^{p} c_j x_1^{p - j} x_2^{j} + \sum_{j=1}^{\frac{p - 1}{2}} c_j (x_1 + x_2)^{p - 2j} x_3^{j} + x_3^{p}
\end{eqnarray}
and is of rank $N(\Gamma) = \frac{p^2 + 4p + 7}{4}.$ At this stage, we set all coefficients equal to one due to their irrelevance in the present proof. 
We now replace the monomial $x_1^{p} $ by $ x_1^{p} \cdot f_{\Gamma}$, leading to 
\begin{equation}\label{eqit1}
\begin{aligned}f_{-1,0} & = & 
x_1^{2p} + \sum_{j=1}^{p}  x_1^{2p - j} x_2^{j}  + \sum_{j=1}^{\frac{p - 1}{2}}  x_1^{p} (x_1 + x_2)^{p - 2j} x_3^{j}  + x_1^{p} x_3^{p} 
 \\
&& +  x_1^{p - 1} x_2+ \sum_{j=2}^{p} x_1^{p - j} x_2^{j}  + \sum_{j=1}^{\frac{p - 1}{2}}  (x_1 + x_2)^{p - 2j} x_3^{j}  + x_3^{p}.
\end{aligned}
\end{equation}
This new invariant  polynomial is of rank $2N(\Gamma)-1$ since there is no overlap between the newly introduced monomials and the previous ones.

\vspace{0.5cm}

To deal with $f_{-1,1}$, we first multiply the monomial  $x_1^{p - 1} x_2$ by $f_{\Gamma}$ and obtain 
$$
\underbrace{x_1^{2p - 1} x_2 
+ \sum_{j=1}^{p} x_1^{2p - j - 1} x_2^{j + 1}}_{I} 
+\underbrace{\sum_{j=1}^{\frac{p - 1}{2}} x_1^{p - 1} x_2 (x_1 + x_2)^{p - 2j} x_3^j}_{II} 
+ \underbrace{x_1^{p - 1} x_2 x_3^{p}}_{III}.
$$
We  then substitute in \eqref{eqit1} the term $x_1^{p - 1} x_2$ by the above. Most of the newly introduced monomials in I, II and II overlap with the  ones in $f_{-1,0}$ and we need to identify exactly which ones to accurately determine the rank of  $f_{-1,1}$.
First, note that the term III is a new monomial. The monomials in I appear already in the second term of \eqref{eqit1} with the exception of 
$ x_1^{p - 1} x_2^{p + 1}$. Thus the expression I introduces only one new monomial. We now focus on the term II and compare it with the only term of \eqref{eqit1} in which an overlap may occur, namely the third term of \eqref{eqit1} 
$$S_1 := \sum_{j=1}^{\frac{p - 1}{2}} x_1^{p} (x_1 + x_2)^{p - 2j} x_3^j.$$
Monomials in II and $S_1$ are respectively the form 
$x_1^{p +\ell- 1} x_2^{p - 2j-(\ell-1)} x_3^j$
and
$x_1^{p +\ell} x_2^{p - 2j-\ell} x_3^j $
with $j=1,\ldots,\frac{p-1}{2}, \ \ell=0,\ldots,p-2j.$ Thus, the term II introduces exactly $\frac{p-1}{2}=k$  new monomials, those the form $x_1^{p - 1} x_2^{p - 2j+1} x_3^j$,  $j=1,\ldots,\frac{p-1}{2}$.  
It follows that the rank of the new invariant polynomial  $f_{-1,1}$ is $2N(\Gamma)-1+(k+1)$. Moreover,  $f_{-1,1}$ is of the form 
\begin{eqnarray*} 
f_{-1,1}&=& x_1^{2p} + \sum_{j=1}^{p}  x_1^{2p - j} x_2^{j} +  x_1^{p - 1} x_2^{p + 1}+ \sum_{j=1}^{\frac{p - 1}{2}} x_1^{p} (x_1 + x_2)^{p - 2j} x_3^{j}  +
\sum_{j=1}^{\frac{p - 1}{2}}x_1^{p - 1} x_2^{p - 2j+1} x_3^j\\
&& + \sum_{\ell=0}^{1}x_1^{p - \ell} x_2^{\ell} x_3^{p}
+ \sum_{j=2}^{p} x_1^{p - j} x_2^{j}  
+ \sum_{j=1}^{\frac{p - 1}{2}}  (x_1 + x_2)^{p - 2j} x_3^{j}  
+ x_3^{p}.\\ 
\end{eqnarray*}

\medskip

Suppose now that after the $m^{th}$ iteration, with $m\leq p-1$, the invariant polynomial $f_{-1,m}$ has rank equal to $2N(\Gamma) - 1 + m(k+1)$ and is of the form
\begin{eqnarray*} 
f_{-1,m}&=& 
x_1^{2p} + \sum_{j=1}^{p}  x_1^{2p - j} x_2^{j} +  \sum_{\ell=1}^{m} x_1^{p - \ell} x_2^{p + \ell}+ \sum_{j=1}^{\frac{p - 1}{2}}  x_1^{p} (x_1 + x_2)^{p - 2j} x_3^{j} +\sum_{j=1}^{\frac{p - 1}{2}}\sum_{\ell=1}^{m}x_1^{p - \ell} x_2^{p - 2j+\ell} x_3^j\\
&& +\sum_{\ell=0}^{m}x_1^{p - \ell} x_2^{\ell} x_3^{p}+
 \sum_{j=m+1}^{p} x_1^{p - j} x_2^{j}  
+ \sum_{j=1}^{\frac{p - 1}{2}}  (x_1 + x_2)^{p - 2j} x_3^{j}  
+ x_3^{p}.\\ 
\end{eqnarray*}
We then replace in $f_{-1,m}$  the monomial
$x_1^{p - m - 1} x_2^{m+1}$ by the expression $x_1^{p - m - 1} x_2^{m+1} \cdot f_{\Gamma}$ which is equal to
$$
\underbrace{x_1^{2p - m - 1} x_2^{m+1} +\sum_{j=1}^{p} x_1^{2p - m-1- j} x_2^{m+1+j}}_{I} 
+\underbrace{\sum_{j=1}^{\frac{p - 1}{2}}   x_1^{p - m - 1} x_2^{m+1}(x_1 + x_2)^{p - 2j} x_3^{j} }_{II} 
+ \underbrace{ x_1^{p - m - 1} x_2^{m+1}x_3^{p}}_{III}.
$$
We treat these terms in a similar manner than we did  in the first iteration. The monomial III is new. All terms in the expression I appear in either the second or third terms of $f_{-1,m}$
with the exception of $x_1^{2p - m-1- j} x_2^{m+1+j}$ for $j=p$, that is, $x_1^{p - m-1} x_2^{p+m+1}$. Finally, most terms in II are present in either the fourth or fifth term of $f_{-1,m}$, and the $k$ new monomials are exactly $x_1^{p - (m+1)} x_2^{p - 2j+m+1} x_3^j$ for $j=1\ldots,\frac{p-1}{2}$. This proves \eqref{eqran}.

\vspace{0.5cm}

The proof in the case of the subgroup  $ \langle \omega I_1 \oplus \omega^2 I_2 \rangle$,  is similar and follows from the fact that the corresponding invariant polynomial is given by 
$f_{p,2}(x_1, x_2 + x_3),$ with $x_2$ and $x_3$ having the same weight. Note that $f_\Gamma$ is of rank $\frac{p^2 + 12p + 11}{8}.$

\end{proof}

\vspace{2mm}

\begin{lemma}\label{lemran2}
Consider the admissible subgroup $\Gamma = \langle \omega I_3 \rangle$ where $\omega$ is a primitive $p^{th}$ root of unity with $p \geq 2$. 
We have for $m=0,\ldots,p$ and $m_0=-1,0,\ldots,m$,  
$$\text{rank}(f_{m_0,m})=2N(\Gamma)+m_0+m\cdot p.$$ 
\end{lemma}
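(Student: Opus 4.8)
The plan is to first determine $f_\Gamma$ explicitly. For $\Gamma=\langle\omega I_3\rangle$ with $\omega$ a primitive $p^{th}$ root of unity, each $\gamma=\omega^j I_3$ acts as a scalar, so $\langle\gamma z,z\rangle=\bar\omega^{\,j}t$ with $t:=|z_1|^2+|z_2|^2+|z_3|^2=x_1+x_2+x_3$. As $j$ runs over $0,\dots,p-1$ the numbers $\bar\omega^{\,j}$ exhaust the $p^{th}$ roots of unity, so $\prod_{\gamma}(1-\langle\gamma z,z\rangle)=\prod_{\zeta^p=1}(1-\zeta t)=1-t^p$ and hence $\Phi_\Gamma=t^p$. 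Thus $f_\Gamma(x_1,x_2,x_3)=(x_1+x_2+x_3)^p$, a polynomial homogeneous of degree $p$ whose distinct monomials are exactly those $x_1^Ax_2^Bx_3^C$ with $A+B+C=p$; in particular $N(\Gamma)=\binom{p+2}{2}$.

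Next I would use that the rank of a polynomial in $\mathcal P(\Gamma)$ equals its number of distinct monomials, since the operations $F_j,G_j$ only introduce terms with positive coefficients and no cancellation can occur (overlapping contributions merely add). Replacing a degree-$p$ monomial $x_1^{p-j}x_2^j$ by $x_1^{p-j}x_2^j\cdot f_\Gamma$ produces only monomials of degree $2p$, so every $f_{m_0,m}$ consists of monomials of the two degrees $p$ and $2p$, and its rank splits as the sum of the two corresponding counts. For the degree-$p$ part I would track which monomials $x_1^{p-j}x_2^j$ survive: the maps $G_0,\dots,G_{m_0}$ retain them (with coefficient $\tfrac12$) whereas $F_{m_0+1},\dots,F_m$ delete them, so exactly $m-m_0$ of the $\binom{p+2}{2}$ degree-$p$ monomials disappear, leaving $\binom{p+2}{2}-(m-m_0)$.

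The heart of the argument is the degree-$2p$ count. Each iteration $F_j$ (and the $f_\Gamma$-part of $G_j$) introduces precisely the set $S_j=\{(A,B,C):A+B+C=2p,\ A\ge p-j,\ B\ge j\}$, so I must compute $\big|\bigcup_{j=0}^m S_j\big|$. The key step is the characterization that a degree-$2p$ triple $(A,B,C)$ lies in some $S_j$ with $0\le j\le m$ if and only if there is an integer $j$ with $\max(0,p-A)\le j\le\min(m,B)$, which (using that $m,B\ge0$ and $A+B=2p-C$) simplifies to the two inequalities $A\ge p-m$ and $C\le p$. A direct lattice-point count under these constraints, parametrizing by $(A,C)$ and splitting the range $A=p-m,\dots,2p$ at $A=p$, then gives
\[
\Big|\bigcup_{j=0}^m S_j\Big|=(m+1)(p+1)+\tfrac{p(p+1)}{2}=\binom{p+2}{2}+m(p+1).
\]

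Combining the two counts yields
\[
\text{rank}(f_{m_0,m})=\Big[\binom{p+2}{2}-(m-m_0)\Big]+\Big[\binom{p+2}{2}+m(p+1)\Big]=2N(\Gamma)+m_0+mp,
\]
the $m-m_0$ terms cancelling cleanly against part of $m(p+1)$; notably $m_0$ enters only through the retained degree-$p$ monomials. Two points need care: checking that the iterations are well defined, i.e. that $x_1^{p-j}x_2^j$ is still present when $F_j$ or $G_j$ is applied, which holds because each $F_{j'}$ deletes only the single degree-$p$ monomial $x_1^{p-j'}x_2^{j'}$ and these maps never recreate a degree-$p$ monomial; and establishing the union characterization together with the ensuing lattice count, which is the main obstacle. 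An inductive alternative in the spirit of Lemma \ref{lemran1}, showing that each successive $F_{m+1}$ adds exactly $p+1$ new degree-$2p$ monomials, would also work, but the direct count is cleaner.
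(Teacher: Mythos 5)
Your proof is correct, and it takes a genuinely different route from the paper's. The paper proves the case $m_0=-1$ by induction on $m$: it writes out the explicit form of $f_{-1,m}$ at each stage (its equation \eqref{eqitm}) and checks that the $(m{+}1)^{st}$ iteration contributes exactly $p+1$ new degree-$2p$ monomials (those with $\ell=0$ in the newly introduced sum) while deleting one degree-$p$ monomial, giving the net increment of $p$; the general $m_0$ is then handled as in Corollary \ref{coran1}, since $f_{m_0,m}$ differs from $f_{-1,m}$ only by the retained monomials $\sum_{j=0}^{m_0}x_1^{p-j}x_2^j$. You instead give a direct, non-inductive count: splitting the rank by homogeneity degree (valid since no cancellation occurs and the iterations only create degree-$2p$ terms), observing that $m_0$ enters only through the $N(\Gamma)-(m-m_0)$ surviving degree-$p$ monomials, and computing the degree-$2p$ contribution as the union $\bigcup_{j=0}^m S_j$ via the characterization $A\ge p-m$, $C\le p$ and a lattice-point count — I checked the characterization (the four conditions in $\max(0,p-A)\le\min(m,B)$ reduce to exactly those two) and the count $(m+1)(p+1)+\tfrac{p(p+1)}{2}=\binom{p+2}{2}+m(p+1)$, and both are right, as is the final combination. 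Your approach buys uniformity in $m_0$ and makes its role transparent, whereas the paper's induction buys the explicit closed form \eqref{eqitm} of $f_{-1,m}$, which is not incidental: that formula is reused verbatim in the proof of Lemma \ref{lemran3} (to count the overlap created by the iteration $H$) and hence in part ii.\ of Theorem \ref{theomain}, so if one adopted your argument in the paper, the explicit description of the monomial set of $f_{m_0,m}$ (which your union characterization does in fact provide) would still need to be recorded for later use.
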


\begin{proof}
Similarly to Lemma \ref{lemran1} and Corollary \ref{coran1}, we will only prove that, for $m=0,\ldots,p$,  we have 
$$\text{rank}(f_{-1,m})=2N(\Gamma)-1+m\cdot p.$$ We consider  the canonical invariant polynomial  associated with  $\Gamma$ 
$$ f_{\Gamma}(x_1, x_2, x_3)=f_{\Gamma,1}(x_1 +x_2, x_3)=\sum_{\begin{subarray}{c}j+k+\ell=p\end{subarray}}\frac{p!}{\,j!\,k!\ell!}\,x_{1}^{j}x_{2}^{k}x_{3}^{\ell}$$
where  $f_{\Gamma,1}(x,y)=(x+y)^p$. In the above sum and all those below, the indices $j,k$ and $\ell$ are nonnegative. 
The polynomial  $f_{\Gamma}$ is of rank  $N(\Gamma) = \binom{p + 2}{2} = \frac{(p + 1)(p + 2)}{2}$. We also set all coefficients equal to one. We now replace  
$x_{1}^{p}$ by $x_{1}^{p}\cdot f_{\Gamma}$ and get 
\begin{equation*}
\begin{aligned}f_{-1,0} & =  & \sum_{\begin{subarray}{c}j+k+\ell=p\end{subarray}}x_{1}^{p+j}x_{2}^{k}x_{3}^{\ell} + \sum_{\begin{subarray}{c}j+k+\ell=p\\ j\neq p\end{subarray}}x_{1}^{j}x_{2}^{k}x_{3}^{\ell} 
\end{aligned}
\end{equation*}
which is  of rank $2N(\Gamma)-1=(p+2)(p+1)-1$. 

By a direct computation, we obtain
\begin{equation*}
\begin{aligned}f_{-1,1} & =  & \sum_{\begin{subarray}{c}j+k+\ell=p\end{subarray}}x_{1}^{p+j}x_{2}^{k}x_{3}^{\ell} +\sum_{\begin{subarray}{c}j+k+\ell=p\end{subarray}}x_{1}^{p+j-1}x_{2}^{k+1}x_{3}^{\ell} + \sum_{\begin{subarray}{c}j+k+\ell=p\\ (j,\ell)\neq (p,0),(p-1,1)\end{subarray}}x_{1}^{j}x_{2}^{k}x_{3}^{\ell} 
\end{aligned}
\end{equation*}
The polynomial $f_{-1,1}$  contains  $p+1$ linearly independent  monomials that do not appear in  $f_{-1,0}$, namely the terms in the second sum for which $j=0$; all other terms in the second sum appear in the first. Thus, and taking into account the term $x_1^{p-1}x_2$ in $f_{-1,0}$ that has been substituted, the rank of $f_{-1,1}$ is  $2N(\Gamma)-1+p$.

We claim that the polynomial $f_{-1,m}$, $m=2,\ldots,p$ is  of rank $2N(\Gamma)-1+m\cdot p$. Indeed, the new contributions in the polynomial
\begin{equation}
\label{eqitm}
\begin{aligned}
f_{-1,m}  = & \sum_{\begin{subarray}{c}j+k+\ell=p\end{subarray}}x_{1}^{p+j}x_{2}^{k}x_{3}^{\ell} + \sum_{s=1}^{m-1} \sum_{\begin{subarray}{c}j+k+\ell=p\end{subarray}}x_{1}^{p+j-s}x_{2}^{k+s}x_{3}^{\ell} +  \sum_{\begin{subarray}{c}j+k+\ell=p\end{subarray}}x_{1}^{p+j-m}x_{2}^{k+m}x_{3}^{\ell} \\
\\
&  +\sum_{\begin{subarray}{c}j+k+\ell=p\\ \ (j,\ell)\neq (p,0),(p-1,1),\ldots, (p-m,m),\end{subarray}}x_{1}^{j}x_{2}^{k}x_{3}^{\ell} 
\end{aligned}
\end{equation}
 arise from the $p+1$  terms in the  third sum  for which $\ell=0$.

\end{proof}

Finally, we  need the following lemma in relation with the iteration $H$ defined earlier. 
\begin{lemma}\label{lemran3}
Following the notations of Lemma \ref{lemran2}, we have, for $m=2,\ldots, p$ and $m_0=-1,0,\ldots,m$,  
$$\text{rank}(H(f_{m_0,m}))=\text{rank}(f_{m_0,m})+N(\Gamma)-2-m.$$ 
\end{lemma}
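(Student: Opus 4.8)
The plan is to use the structural description of $f_{m_0,m}$ obtained in the proof of Lemma \ref{lemran2} to determine exactly which monomials created by $H$ are genuinely new. The guiding observation is that the increment must be independent of $m_0$: the iteration $H$ removes the monomial $x_3^p$ and inserts only monomials of degree $2p$, whereas $m_0$ merely dictates which degree-$p$ monomials of $f_\Gamma$ survive in $f_{m_0,m}$.

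First I would record the relevant shape of $f_{m_0,m}$. With $f_\Gamma=\sum_{j+k+\ell=p}x_1^jx_2^kx_3^\ell$ (coefficients normalized to one), each iteration $F_s$ or $G_s$, $s=0,\dots,m$, contributes the product
$$x_1^{p-s}x_2^s\cdot f_\Gamma=\sum_{j+k+\ell=p}x_1^{p-s+j}x_2^{s+k}x_3^\ell,$$
a sum of degree-$2p$ monomials each of $x_3$-degree at most $p$. Hence the degree-$2p$ part of $f_{m_0,m}$ equals $\bigcup_{s=0}^{m}\{\text{monomials of }x_1^{p-s}x_2^sf_\Gamma\}$, a set that does not depend on $m_0$, while the degree-$p$ part consists of surviving originals. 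In particular $x_3^p$ is never consumed, since every consumed monomial $x_1^{p-s}x_2^s$ has vanishing $x_3$-degree; thus $x_3^p$ is present and $H$ genuinely acts.

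Next I would carry out the count. The map $H$ deletes $x_3^p$ and inserts $x_3^p\cdot f_\Gamma=\sum_{j+k+\ell=p}x_1^jx_2^kx_3^{p+\ell}$, a family of $N(\Gamma)$ monomials of degree $2p$ and $x_3$-degree $p+\ell\geq p$, which I split according to $\ell$. The terms with $\ell\geq1$ have $x_3$-degree strictly greater than $p$, so they cannot coincide with any monomial already present, the existing degree-$2p$ monomials having $x_3$-degree at most $p$; they contribute $N(\Gamma)-(p+1)$ new monomials. The terms with $\ell=0$ are $x_1^jx_2^{p-j}x_3^p$, $j=0,\dots,p$, and such a monomial appears in $f_{m_0,m}$ exactly when it equals $x_1^{p-s}x_2^sx_3^p$ for some $s\in\{0,\dots,m\}$, that is, when $j\geq p-m$; hence precisely the $p-m$ monomials with $j\leq p-m-1$ are new.

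Finally I would assemble the tally: the number of new monomials is $\big(N(\Gamma)-(p+1)\big)+(p-m)=N(\Gamma)-1-m$, and subtracting the single deleted monomial $x_3^p$ gives $\text{rank}(H(f_{m_0,m}))=\text{rank}(f_{m_0,m})+N(\Gamma)-2-m$. I expect the only delicate point to be the bookkeeping of the $\ell=0$ case, namely recognizing that the degree-$2p$ monomials of $f_{m_0,m}$ with $x_3$-degree exactly $p$ are precisely $x_1^{p-s}x_2^sx_3^p$ for $0\le s\le m$; the clean separation by $x_3$-degree disposes of the $\ell\geq1$ case at once and accounts for the independence from $m_0$.
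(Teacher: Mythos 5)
Your proof is correct and follows essentially the same route as the paper's: both arguments insert the $N(\Gamma)$ monomials of $x_3^p\cdot f_\Gamma$, identify exactly the $m+1$ overlaps $x_1^{p-s}x_2^sx_3^p$, $s=0,\ldots,m$, with monomials already present in $f_{m_0,m}$, and subtract one for the consumed monomial $x_3^p$. Your organization by $x_3$-degree (and your observation that the degree-$2p$ part of $f_{m_0,m}$ is independent of $m_0$, handling all cases at once rather than reducing to $f_{-1,m}$ as the paper does) is a slightly cleaner presentation of the same count.
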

\begin{proof}

Fix $m \in \{2,\ldots,p\}$ and first consider the polynomial $f_{-1,m}$ given by \eqref{eqitm}. It follows that $H(f_{-1,m})$ is of the form 
\begin{eqnarray*}
H(f_{-1,m}) & =  & \sum_{\begin{subarray}{c}j+k+\ell=p\end{subarray}}x_{1}^{p+j}x_{2}^{k}x_{3}^{\ell} + \sum_{s=1}^{m} \sum_{\begin{subarray}{c}j+k+\ell=p\end{subarray}}x_{1}^{p+j-s}x_{2}^{k+s}x_{3}^{\ell} \\
\\
& & +\sum_{\begin{subarray}{c}j+k+\ell=p\\ (j,\ell)\neq (p,0),(p-1,1),\ldots, (p-m,m)\end{subarray}}x_{1}^{j}x_{2}^{k}x_{3}^{\ell} 
+  \sum_{\begin{subarray}{c}j+k+\ell=p\end{subarray}}x_{1}^{j}x_{2}^{k}x_{3}^{\ell+p}.
\end{eqnarray*}
In the last sum, which contains $N(\Gamma)$ terms, exactly $m+1$ of them occur already in the first four sums of  \eqref{eqitm}. Indeed, the monomial $x_{1}^p x_{3}^p$ is contained in the first sum, while $x_1^{p-s}x_2^sx_3^p, s=1,\ldots, m-1$, appear in the second sum and  
$x_1^{p-m}x_2^mx_3^p$ occurs in the third sum.  

The proof is similar for polynomials $f_{m_0,m}, m_0=0,\ldots, m$ since, keeping in mind that we set all coefficients equal to one, we have 
$$f_{m_0,m}=f_{-1,m}+\sum_{j=0}^{m_0}x_1^{p-j}x_2^j.$$ 
\end{proof}

\subsection{Proof of Theorem \ref{theomain}}

We recall that in order to prove Theorem \ref{theomain}, we need to construct polynomials in $\mathcal{P}(\Gamma)$ 
of ranks spanning a set consisting of $N(\Gamma)-1$ consecutive integers, namely $\{n(\Gamma),n(\Gamma)+1,\ldots,n(\Gamma) +N(\Gamma)-2\}$. 

\begin{proof}
We start with i. 
Due to their similarities, we focus on the group $\Gamma=\big\langle\omega I_2 \oplus\omega^2 I_1\big\rangle$.  Starting from the canonical polynomial $f_\Gamma$ \eqref{eqf}, we  consider the  polynomials $f_{m_0,m} \in \mathcal{P}(\Gamma)$, $m=0,\ldots,p$ and $m_0=-1,0,\ldots,m$ defined in \eqref{eqf1} and \eqref{eqf2}. According to Lemma \ref{lemran1} and Corollary \ref{coran1}, 
the $k+1$ polynomials $f_{m_0,k-1}, m_0=-1,0,\ldots,k-1$ have ranks spanning 
$$\{2N(\Gamma)+k^2-2, 2N(\Gamma)+k^2-1,2N(\Gamma)+k^2,\ldots, 2N(\Gamma)+k^2+k-2\}.$$
It follows that for $m=k-1,\ldots,p-2$ and $m_0=-1,0\ldots,m$, the ranks of the polynomials 
$f_{m_0,m}$ and $f_{-1,p-1},\ldots,f_{k-1,p-1}$   cover a set of $N(\Gamma)-1=(k+2)(k+1)$ consecutive integers. Indeed, the polynomial 
$f_{-1,k-1}$ has the lowest rank $2N(\Gamma)+k^2-2$ and $f_{k-1,p-1}$ has the highest rank 
$$2N(\Gamma)+k-1+2k(k+1)=2N(\Gamma)+k^2-2+N(\Gamma)-2,$$
and all ranks in between are achieved, which ends the proof of i. 

\vspace{0.5cm}

We now prove the second statement ii. Recall that according to Lemma \ref{lemran2}, the rank of the polynomial $f_{m_0,m}$ for $m_0=-1,0,\ldots,m$ and $m=0,\ldots,p$ is equal to   $2N(\Gamma)+m_0+m\cdot p.$ 
We note that for $m=0,\ldots,p$, the $m+2$ polynomials $f_{m_0,m}, m_0=-1,\ldots,m$ have ranks spanning the set 
$$\{2N(\Gamma)-1+m\cdot p, 2N(\Gamma)+m\cdot p,\ldots, 2N(\Gamma)+m+m\cdot p\}.$$
Thus, for $p-2 \leq m\leq p$, the ranks of these polynomials  $f_{m_0,m}, m_0=-1,\ldots,m$ cover the set of $3p+2$ consecutive integers 
$$\{2N(\Gamma)-1+(p-2)p, 2N(\Gamma)+(p-2) p,\ldots, 2N(\Gamma)+p+p^2\},$$
Notice that $2N(\Gamma)+p+p^2+1=2N(\Gamma)-1+(p-2)p+3p+2$. We now claim that the  integers 
$$2N(\Gamma)-1+(p-2)p+3p+2,\ldots, 2N(\Gamma)-1+(p-2)p+N(\Gamma)-2$$ 
are also ranks of polynomials arising from $f_{m_0,m}$ with $\lfloor p/2\rfloor-1 \leq m\leq p-2$. Indeed, from Lemma \ref{lemran3} the rank of the polynomial $H(f_{m_0,m})$ is equal to  
$$2N(\Gamma)-1+m\cdot p+N(\Gamma)+m_0-m-1.$$ In addition, the rank of $F_0(f_{m_0,m})$  is equal to 
$$2N(\Gamma)-1+m\cdot p+N(\Gamma)+m_0.$$
Consider an integer $2N(\Gamma)-1+(p-2)p+\ell=2N(\Gamma)-1+p^2-2p+\ell$ with $3p+2 \leq \ell \leq N(\Gamma)-2$. According to the Euclidean division, we can write: 
$$p^2-2p+\ell-N(\Gamma)=q\cdot p + r$$
with $\lfloor p/2\rfloor-1 \leq q \leq p-3$  and $0\leq r < p$. In case $0\leq r \leq q$, the integer $2N(\Gamma)-1+(p-2)p+\ell$ is then the rank of the polynomial  $F_0(f_{r,q})$. If instead we have $q<  r \leq p-1$,then the integer $2N(\Gamma)-1+(p-2)p+\ell$ is the rank of 
the polynomial $H(f_{m_0,q+1})$ with $m_0=q+2+r-p$ that satisfies $0 \leq m_0\leq q+1.$ 
This achieves the proof of ii.

\vspace{0.5cm}
Finally, we move to iii.
The canonical polynomial, of rank $N(\Gamma)=17$,  is given by 
\begin{eqnarray*}
f_{\Gamma}(x_1, x_2, x_3) & = & x_1^7 + 7x_1^5x_2 + 14x_1^3x_2^2 + 7x_1x_2^3  + 7x_1^3x_3 + 14x_1x_2x_3    \\
\\
&&  + x_2^7 + 7x_1^2x_2^4x_3 + 7x_2^5x_3 +7x_1^4x_2x_3^2  + 7x_1^2 x_2^2 x_3^2+14x_2^3x_3^2\\
\\
& & + 14x_1^2  x_3^3+ 7x_2 x_3^3 \;+ 7x_1 x_2^2 x_3^4+ 7x_1  x_3^5+ x_3^7.\\
\end{eqnarray*}

Note that $f_{\Gamma}$ is the sum of weighted homogeneous monomials  in that case. The method used for the three previous groups does not apply directly due to the fact that each variable in $f_{\Gamma}$ has a different weight, namely $1$ 
for $x_1$, $2$ for $x_2$ and $4$ for $x_4$. Instead, we follow an ad-hoc approach. The previous iterative self maps $F,G$ and $H$ of $\mathcal{P}(\Gamma)$ - and their action on the ranks - are not entirely convenient in this case and, instead of introducing new weighted maps, we simply deal with seven ad-hoc iterations to produce polynomials with ranks spanning $N(\Gamma)-1=16$ consecutive integers. In what follows, rank computations are straightforward, focusing only on powers thought to be triple $(j,k,\ell)$ occurring in the corresponding polynomials.  

The first four iterations are relatively standard and are focused on the monomials $x_1^{7-2j}x_2^{j}$ for $j=0,\ldots,3$. We first consider the new polynomials $f_{-1,0}=F_0(f_\Gamma)$ and $f_{0,0}=G_0(f_\Gamma)$ of respective ranks $33$ and $34$. We now denote  by $f_{-1,1}$ the polynomial obtained from $f_{-1,0}$ by replacing $x_1^5x_2$ by  $x_1^5x_2 \cdot f_\Gamma$; doing so in $f_{0,0}$ leads to  $f_{0,1}$. Moreover, replacing $x_1^5x_2$ by $\frac{1}{2} x_1^5x_2 + \frac{1}{2} x_1^5x_2\cdot f_\Gamma$  in $f_{0,0}$ provides a polynomial denoted by $f_{1,1}$. We introduce  new polynomials, namely $f_{m_0,m}, m=2,3, \ m_0=-1,\ldots,m$, repeating this idea on $x_1^3x_2^2$ and $x_1x_2^3$.  After straightforward computations, we note the following: the ranks of  $f_{m_0,1}, m_0=-1,0,1$ are $41, 42$ and $43$. Those of $f_{m_0,2}, m_0=-1,\ldots,2$ span the set $\{49,\ldots,52\}$, while those of $f_{m_0,3}, m_0=-1,\ldots,3$ cover $\{56,\ldots,60\}$. The next six polynomials $f_{m_0,4}, \ m_0=-1,\ldots,4$  are obtained by applying the same procedure on $x_1x_2x_3$ and  $f_{m_0,3}$, that is, replacing $x_1x_2x_3$ by $x_1x_2x_3\cdot f_\Gamma$  or $\frac{1}{2}x_1x_2x_3+\frac{1}{2}x_1x_2x_3\cdot f_\Gamma$. Doing so gives the ranks $\{58,\ldots,63\}$. Considering now $x_1^2x_3^3$ gives seven new polynomials $f_{m_0,5}, \ m_0=-1,\ldots,5$ of ranks spanning $\{62,\cdots 68\}$.   

At this stage, we have obtained ranks that cover  $\{56,\ldots,68\}$. We then consider $x_2x_3^3$ to obtain the next eight polynomials $f_{m_0,6}, \ m_0=-1,\ldots,6,$ and note that $f_{m_0,6}$ for $m_0=2,3,4$ have respective ranks $69,70$ and $71$.

\end{proof}

\noindent {\bf Acknowledgments} This work was  supported by the Center for Advanced Mathematical Sciences and by a URB grant from the American University of Beirut.

\vskip 0.5cm
{\small
\noindent Mona Al Batrouni, Florian Bertrand\\
Department of Mathematics,\\
American University of Beirut, Beirut, Lebanon\\
{\sl E-mail address}: mab108@mail.aub.edu, fb31@aub.edu.lb\\

\end{document}